\newcommand{\alp}{\alpha}
\newcommand{\sig}{\sigma}
\newcommand{\Sig}{\Sigma}
\newcommand{\longc}{,\dotsc,}
\newcommand{\longp}{+\dotsb+}
\newcommand{\longe}{=\dotsb=}
\newcommand{\longle}{\le\dotsb\le}
\newcommand{\seq}{\subseteq}
\newcommand{\stm}{\setminus}
\renewcommand{\>}{\rangle}
\renewcommand{\)}{\right)}
\newcommand{\lfl}{\left\lfloor}
\newcommand{\rfl}{\right\rfloor}
\newcommand{\lfr}{\left\{}
\newcommand{\rfr}{\right\}}
\newtheorem{claim}{Claim}
\newtheorem{theorem}{Theorem}
\newtheorem{corollary}{Corollary}
\newcommand{\refm}[1]{\ref{m:#1}}
\newcommand{\reft}[1]{\ref{t:#1}}
\newcommand{\refs}[1]{\ref{s:#1}}
\newcommand{\refb}[1]{\cite{b:#1}}
\DeclareMathOperator{\ord}{ord}
\title[Zero-sum-free sequences]%
  {Zero-sum-free sequences \\ with few subsequence sums}
\author{Vsevolod F. Lev}
\email{seva@math.haifa.ac.il}
\address{Department of Mathematics, The University of Haifa at Oranim,
  Tivon 36006, Israel}
\subjclass[2020]{Primary 11B13; Secondary 11P70, 11B75, 11B50}
\keywords{Zero-sum-free sequences, Subset sums, Hilbert cube}
\begin{document}
\baselineskip=16pt

\begin{abstract}
Extending the results of Savchev-Chen and Yuan, we show that a
zero-sum-free sequence of length $n$ over an abelian group spans at least
$2n$ distinct subsequence sums, unless it has a simple, explicitly
described structure.
\end{abstract}

\maketitle

\section{Introduction}\label{s:intro}

A sequence of elements of an abelian group is called \emph{zero-sum-free} if
it does not contain a finite, nonempty subsequence with the zero sum of its
terms. The problem of estimating the smallest possible number of distinct
subsequence sums of a finite zero-sum-free sequence was raised by Eggleton
and Erd\H os~\refb{ee}, and since then has attracted much attention; we refer
the reader to the survey by Gao and Geroldinger~\refb{ggS} for a
comprehensive introduction to the subject area and historical overview.

In one of the early papers on this subject, Olson and White~\refb{ow} have
shown that a zero-sum-free sequence of length $n$ determines at least $2n$
distinct subsequence sums provided that the subgroup generated by the
elements of the sequence is not cyclic; here the empty subsequence with the
zero sum of its terms is counted, too. For the infinite cyclic group
(identified with the group of integers), a complete description of the
length-$n$ sequences with fewer than $2n$ subsequence sums was obtained
in~\cite[Proposition~1]{b:l} under the assumption that all elements of the
sequence are positive; the general case where the elements are allowed to be
negative has never been considered, to the best of our knowledge.

As an immediate corollary of the result of Olson and White, one
recovers~\cite[Theorem~3]{b:ee}: a zero-sum-free sequence generating a
finite, noncyclic group of order $m$ has length $n\le m/2$.

Given a sequence $\alp$ in an abelian group, let $\Sig(\alp)$ denote the set
of all subsequence sums of $\alp$ (including the empty subsequence); thus, if
$\alp=(a_1\longc a_n)$, then
  $$ \textstyle \Sig(\alp) = \lfr \sum_{i\in I} a_i\colon I\seq[1,n] \rfr. $$
In 2007, Savchev and Chen, and independently Yuan, proved a remarkable result
characterizing zero-sum-free sequences of length $n>m/2$ in the finite
\emph{cyclic} group of order $m$.

\begin{theorem}[Savchev-Chen~\refb{sc}, Yuan~\refb{y}]\label{t:scy}
Suppose that $\alp=(a_1\longc a_n)$ is a zero-sum-free sequence of elements
of the cyclic group of order $m$. If $n>m/2$, then, having the elements of
$\alp$ suitably renumbered, there are a group element $a$ of order $m$ and
integers $1=x_1\longle x_n$ satisfying
 $x_{k+1}\le 1+x_1\longp x_k,\ k\in[1,n-1]$, such that $a_k=x_ka$ for all
$k\in[1,n]$. Moreover, $\Sig(\alp)=\{0,a,2a\longc(x_1\longp x_n)a\}$ and
$x_1\longp x_n<m$.
\end{theorem}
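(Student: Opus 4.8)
The plan is to split the statement into the easy \emph{sufficiency} (that a configuration of the asserted shape is zero-sum-free and has exactly $x_1\longp x_n+1$ subsequence sums, forming the stated progression) and the substantive \emph{necessity} (that every long zero-sum-free sequence has this shape). Throughout, write $G$ for the cyclic group of order $m$, let $\Sig(\alp)$ denote the set of all subsequence sums of $\alp$ including $0$, and put $\sig=a_1\longp a_n$.

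First I would fix the ambient subgroup. Let $H=\<a_1\longc a_n\>$, of order $d\mid m$. Along any ordering the partial sums $0,a_1,a_1+a_2\longc\sig$ are pairwise distinct, since a coincidence would exhibit a nonempty zero-sum subsequence; these are $n+1$ elements of $H$, whence $n+1\le d$. A proper divisor would give $d\le m/2$ and hence $n\le d-1\le m/2-1$, contradicting $n>m/2$. Thus $H=G$, and fixing a generator $a$ we may write $a_k=x_ka$ with $x_k\in[1,m-1]$; every assertion of the theorem now becomes a statement about the integers $x_k$.

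For sufficiency I would invoke the elementary fact that a nondecreasing sequence of positive integers $1=x_1\longle x_n$ has the \emph{full} interval $\{0,1\longc x_1\longp x_n\}$ as its set of integer subset sums exactly when $x_{k+1}\le 1+x_1\longp x_k$ for every $k$; this is a one-step induction, the inequality being precisely what prevents a gap when $x_{k+1}$ is adjoined to the interval $\{0\longc x_1\longp x_k\}$. Granting in addition $x_1\longp x_n<m$, the assignment $j\mapsto ja$ is injective on this interval because $\ord a=m$, so the subsequence sums of $\alp$ are exactly $\{0,a\longc(x_1\longp x_n)a\}$, an arithmetic progression of $x_1\longp x_n+1$ distinct terms in which $0$ occurs only as the empty sum; this gives zero-sum-freeness, the count, and the description of $\Sig(\alp)$ at once.

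The heart of the matter is the necessity, where the goal is to locate a generator $a$ and an ordering for which the integer subset sums of the multipliers form a complete interval $[0,S]$. Two reductions are already in hand: the partial-sum count gives $|\Sig(\alp)|\ge n+1>m/2$, so $\Sig(\alp)$ occupies more than half of $G$, and complementation of subsequences, $I\mapsto[1,n]\stm I$, gives the symmetry $\Sig(\alp)=\sig-\Sig(\alp)$. Once a good generator is found and completeness is established, the rest is immediate: if the integer interval reached $m$ then some nonempty subsequence would sum to $0$ in $G$, so zero-sum-freeness forces $S<m$; completeness then reads off $x_1=1$ and the staircase inequalities, the symmetry forces $\sig=Sa$, and $\Sig(\alp)=\{0,a\longc Sa\}$ follows. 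To produce the generator and the completeness I would argue by induction on $n$, deleting one judiciously chosen term and using the recursion $\Sig(\alp)=\Sig(\alp')\cup\bigl(\Sig(\alp')+a_k\bigr)$: the density $|\Sig(\alp)|>m/2$ forces the translate $\Sig(\alp')+a_k$ to overlap $\Sig(\alp')$ heavily, which should glue the two into a single progression with a generator as common difference. I expect the main obstacle to lie exactly here. On the one hand the deleted sequence $\alp'$ need not remain within the scope of the induction hypothesis, since it may fail to generate $G$ or may just miss the density threshold $n-1>m/2$, so the choice of which term to remove, and a separate treatment of the boundary cases, will be delicate. On the other hand the passage between $\Sig(\alp)$ and its complement is sensitive to multiplicities: whether $c-a_k\in\Sig(\alp)$ for $c\notin\Sig(\alp)$ depends on how many copies of the value $a_k$ the sequence contains, so the gluing step must track these multiplicities rather than work with the value set alone. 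Pinning down the generator and controlling the complement $G\stm\Sig(\alp)$ under translation by the sequence values is, I expect, the crux of the whole argument.
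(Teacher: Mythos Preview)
Your reduction to the generating case and the sufficiency direction are fine, but the necessity direction is not a proof: it is an outline that names its own gaps without closing them. You propose induction by deleting a term and exploiting the overlap forced by $|\Sig(\alp')|>m/2$, yet as you yourself note, the truncated sequence may drop below the density threshold or fail to generate $G$, and you offer no device to handle either case. More fundamentally, the bare overlap $\Sig(\alp')\cap(\Sig(\alp')+a_k)\ne\varnothing$ does not force $\Sig(\alp')$ to be an arithmetic progression or single out the correct common difference; ``should glue the two into a single progression'' is the whole difficulty, not a step. Also, ``fixing a generator $a$'' at the outset is premature: the theorem asserts the existence of a \emph{particular} generator for which $x_1=1$ and the staircase inequalities hold, and an arbitrary choice of generator will not produce them.

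The paper's route is quite different. It first proves the stronger Theorem~\reft{main} (a zero-sum-free $\alp$ with $|\Sig(\alp)|<2n$, in an arbitrary abelian group, already has the asserted structure); Theorem~\reft{scy} then follows at once, since $n>m/2$ gives $|\Sig(\alp)|\le m<2n$, and $\ord(a)>x_1\longp x_n\ge n>m/2$ forces $\ord(a)=m$. The proof of Theorem~\reft{main} avoids deletion induction entirely. It shows that $\alp$ can be reordered to be either \emph{connected} (each term lies in $\Sig(\alp_{k-1})-\Sig(\alp_{k-1})$) or \emph{sharp} ($|\Sig(\alp)|=|\Sig(\alp_{n-1})|+1$). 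In the connected case a short induction on the associated integer sequence gives the interval structure directly. If no connected rearrangement exists, one reorders to a sharp sequence; a Smith--Freeze structural lemma then says $\Sig(\alp_{n-1})$ is a progression inside $\<a_n\>$ together with full $\<a_n\>$-cosets, and that the last term of any sharp rearrangement is uniquely determined. Playing this uniqueness against a rearrangement that front-loads all copies of $a_n$ yields a contradiction. This sharp/connected dichotomy and the coset-plus-progression description of $\Sig(\alp_{n-1})$ are exactly the structural inputs your overlap heuristic is trying to do without.
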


By $\ord(a)$ we denote the order of a group element $a$.

In this note we prove the following refinement of the result of
Savchev-Chen-Yuan in the spirit of Olson-White.
\begin{theorem}\label{t:main}
Suppose that $\alp=(a_1\longc a_n)$ is a zero-sum-free sequence of elements
of an abelian group. If $|\Sig(\alp)|<2n$ then, having the elements of $\alp$
suitably renumbered, there are a group element $a$ and integers
 $1=x_1\longle x_n$ satisfying $x_{k+1}\le x_1\longp x_k,\ k\in[1,n-1]$,
such that $a_k=x_ka$ for all $k\in[1,n]$. Moreover,
$\Sig(\alp)=\{0,a,2a\longc(x_1\longp x_n)a\}$, and if $a$ is of finite order,
then $x_1\longp x_n<\ord(a)$.
\end{theorem}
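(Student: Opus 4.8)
The plan is to reduce to the cyclic case, to identify the set of subsequence sums as an arithmetic progression, and then to read off the stated structure. Write $\Sig=\Sig(\alp)$ for the set of subsequence sums, so that $|\Sig|<2n$, and put $H=\<a_1\longc a_n\>$. Since $\alp$ is zero-sum-free no term is $0$, and by the theorem of Olson and White recalled in the introduction the bound $|\Sig|<2n$ forces $H$ to be cyclic. I accordingly identify $H$ either with the integers, or, when $H$ is finite, with the cyclic group of order $m=|H|$.

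The main tool is the decomposition $\Sig(\alp)=\Sig(\alp')\cup(a_j+\Sig(\alp'))$, where $\alp'$ is obtained from $\alp$ by deleting a single term $a_j$. As $\alp'$ is itself zero-sum-free its $n$ nested partial sums are pairwise distinct, whence $|\Sig(\alp')|\ge n$ and therefore $|\Sig(\alp)|<2n\le2|\Sig(\alp')|$; thus for every $j$ the two translates on the right must overlap. A separate argument, discussed in the final paragraph, shows that in the integer model all terms of $\alp$ must share a common sign; granting this, after negating if necessary and dividing by the greatest common divisor of the terms, I may assume $a_1\longc a_n$ to be positive integers of greatest common divisor $1$. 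When $H$ is finite with $n>m/2$ I instead invoke Theorem~\reft{scy}, which already exhibits $\Sig$ as the required progression; the finite case with $n\le m/2$ is first transported to the integers as above.

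For positive integers I argue by induction on $n$, renumbering so that $1\le x_1\longle x_n$, with $x_k:=a_k$ and $s_k:=x_1\longp x_k$, and normalising the greatest common divisor of the terms to $1$. Deleting the largest term gives $\Sig(\alp')\seq[0,s_{n-1}]$ with all elements divisible by $d':=\gcd(x_1\longc x_{n-1})$; if either $x_n>s_{n-1}$ or $d'\nmid x_n$ held, then $\Sig(\alp')$ and $x_n+\Sig(\alp')$ would be disjoint, forcing $|\Sig(\alp)|=2|\Sig(\alp')|\ge2n$, a contradiction. Hence $x_n\le s_{n-1}$ and $d'\mid x_n$, the latter giving $d'=1$ so that the normalisation passes to $\alp'$. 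Unless all terms are equal (then $x_k\equiv1$ and $\Sig=\{0,1\longc n\}$), the two values $s_n$ and $s_n-x_1$ are new maximal sums, so deletion drops the count by at least $2$ and the induction hypothesis applies to $\alp'$. Combining the inductive description of $\Sig(\alp')$ with $x_n\le s_{n-1}$ yields $x_1=1$, the inequalities $x_{k+1}\le s_k$ for all $k$, and $\Sig=\{0,a\longc sa\}$ with $a=1$ and $s=s_n$. Passing back to $H$, $a$ is the chosen generator, and since the $s+1$ listed multiples are distinct, $s<\ord(a)$ whenever $a$ has finite order. Finally, in the finite case with $n>m/2$, where Theorem~\reft{scy} supplies only $x_{k+1}\le1+s_k$, I sharpen this using $|\Sig|=s+1<2n$: then $s\le2n-2$, whereas a single equality $x_{k+1}=1+s_k$ is readily seen to force $s\ge2n-1$.

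The step I expect to be the main obstacle is the one deferred above: proving, in the cases not covered by Theorem~\reft{scy}, that $\Sig$ is an arithmetic progression --- concretely, the exclusion of terms of both signs in the integer model, and, for finite $H$ with $n\le m/2$, the control of wrap-around needed to pass to the integers. This is the familiar difficulty of subset-sum lower bounds, the management of coincidences among sums. I would approach it through the sumset identity $\Sig=U-W$, in which $U$ and $W$ are the sets of subset sums of the positive terms and of the negated negative terms; zero-sum-freeness is precisely the statement that $U\stm\{0\}$ and $W\stm\{0\}$ are disjoint, while a value of $|U-W|$ small enough to keep $|\Sig|<2n$ would, by the equality case of the one-dimensional bound $|U-W|\ge|U|+|W|-1$, force $U$ and $W$ to be arithmetic progressions with a common difference and hence to meet outside $0$ --- the sought contradiction. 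Handling the general small-doubling regime (rather than just the extremal case) will require the corresponding stability theory for sumsets of integers, and that is where the real work lies.
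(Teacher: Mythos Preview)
Your proposal is honest about being incomplete, and the gap you flag is genuine, not merely technical. The two deferred points --- ruling out terms of both signs in the integer model, and lifting a finite cyclic group with $n\le m/2$ to the integers --- are precisely where the content of the theorem lives. Your sumset idea $\Sig=U-W$ only yields $|U-W|\ge|U|+|W|-1\ge n+1$, far from forcing equality, so you are left needing a Freiman-type stability statement for $|U-W|<2n$ with no control on $|U|,|W|$ individually; this does not fall out of standard tools, and even if $U$ and $W$ are shown to be progressions, matching their common differences and deriving a contradiction from $U\cap W=\{0\}$ requires further work. The lifting step is worse: to transport from $\mathbb Z/m\mathbb Z$ to $\mathbb Z$ you must already know that some lift of the sequence has all partial sums in $[0,m)$, which is essentially the conclusion you seek.

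The paper avoids all of this by working intrinsically in the group, with no reduction to the cyclic case, no appeal to Olson--White, and no invocation of Theorem~\reft{scy}. The key devices are two notions for a rearranged sequence: \emph{sharp} (appending the last term adds exactly one new subset sum) and \emph{connected} (each term lies in $\Sig(\alp_{k-1})-\Sig(\alp_{k-1})$). The argument shows that any zero-sum-free $\alp$ with $|\Sig(\alp)|<2n$ has a rearrangement that is either connected or sharp; a short contradiction argument using the structure of sharp sequences then eliminates the second alternative when no connected rearrangement exists. Once a connected rearrangement is in hand, an easy induction (your positive-integer induction, in effect, but carried out inside the group) produces the integers $x_k$ and the progression $\Sig(\alp)$ directly, with the bound $\sum x_k<\ord(a)$ coming from zero-sum-freeness. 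This route sidesteps both the sign problem and the wrap-around problem: connectedness is the single structural hypothesis that makes the induction go through in any abelian group.

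Your positive-integer induction and the sharpening $x_{k+1}\le s_k$ from the weaker $x_{k+1}\le 1+s_k$ are correct, but they become unnecessary once the connected rearrangement is established.
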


Theorem~\reft{main} is easily seen to imply Theorem~\reft{scy}; the major
difference between the two results is that Theorem~\reft{main} yields the
conclusion in the situation where the set of subsequence sums is small for
whatever reason, not necessarily because ``the whole group is exhausted''.
Loosely speaking, Theorem~\reft{main} shows that forcing $\alp$ to be
structured is not the fact that $\alp$ is ``long'', as compared to the size
of the group, but rather that $\alp$ has few subsequence sums.

The bound $2n$ for the number of subsequence sums is best possible: if $a_1$
and $a_2$ are group elements of sufficiently large order with
$a_2\notin\{-na_1\longc-a_1,0,a_1\longc na_1\}$, and $\alp=(a_1\longc
a_1,a_2)$ where the element $a_1$ is repeated $n-1$ times, then $\alp$ is
zero-sum-free and $|\Sig(\alp)|=2n$, but $\alp$ does not have the structure
described in Theorem~\reft{main}, as it follows by observing that
$\Sig(\alp)$ is not an arithmetic progression.

A finite sequence $\alp$ is \emph{zero-sum} if the sum of its terms is $0$;
it is \emph{minimal} zero-sum if, in addition, none of its nonempty
subsequences is zero-sum. Theorem~\reft{scy} is known
to have the following equivalent restatement: if $\alp=(a_1\longc a_n)$ is a
minimal zero-sum sequence of elements of the cyclic group of order
$m<2(n-1)$, then there are a group element $a$ and positive integers
$x_1\longc x_n$ such that $x_1\longp x_n=\ord(a)$ and $a_k=x_k a$ for all
$k\in[1,n]$. As a corollary of Theorem~\reft{main}, we obtain the same
conclusion under weaker assumptions.
\begin{corollary}\label{c:the}
Suppose that $\alp=(a_1\longc a_n)$ is a minimal zero-sum sequence of
elements of a finite abelian group. If $|\Sig(\alp)|<2(n-1)$, then there are
a group element $a$ and positive integers $x_1\longc x_n$ such that
$x_1\longp x_n=\ord(a)$ and $a_k=x_k a$ for all $k\in[1,n]$.
\end{corollary}

\begin{proof}
Let $\alp'$ be the sequence of length $n-1$ obtained from $\alp$ by removing
the term $a_n$. Clearly, $\alp'$ is zero-sum-free, and
$|\Sig(\alp')|\le|\Sig(\alp)|<2(n-1)$. By Theorem~\reft{main}, there are a
group element $a$ and positive integers $x_1\longc x_{n-1}$ such that
$x_1\longp x_{n-1}<\ord(a)$ and $a_k=x_ka$ for all $k\in[1,n-1]$. Let
$x_n:=\ord(a)-(x_1\longp x_{n-1})$. Then $x_n$ is a positive integer
satisfying $x_1\longp x_n=\ord(a)$, and we have
  $$ x_n a= (\ord(a)-(x_1\longp x_{n-1}))a = -a_1-\dotsb-a_{n-1}=a_n. $$
\end{proof}


The proof of Theorem~\reft{main} is presented in the next section; it is a
modified version of the original proof of Savchev and Chen. In view of the
above-mentioned result of Olson-White, it would suffice to prove the theorem
in the case where the underlying group is cyclic. However, we give a
complete, self-contained proof which works for any abelian group, whether it
is finite, cyclic, or neither. An application is considered in the concluding
Section~\refs{app}.

\section{Proof of Theorem~\reft{scy}}

We start with some basic notions and facts about the zero-sum-free sequences.

For a finite sequence $\alp$, we denote by $\sig(\alp)$ the sum of all terms
of $\alp$, and by $\alp_k$ the truncated sequence consisting of the first $k$
terms of $\alp$; here $k$ is a positive integer not exceeding the length of
$\alp$. Thus, writing
 $\alp=(a_1\longc a_n)$,
   $$ \sig(\alp)=a_1\longp a_n
           \ \text{and}\ \alp_k=(a_1\longc a_k),\quad 1\le k\le n. $$

As defined in Section~\refs{intro}, given a sequence $\alp$, by $\Sig(\alp)$
we denote the set of all subsequence sums of $\alp$, including the empty
subsequence.

We notice that if $\alp=(a_1\longc a_n)$ is a zero-sum-free sequence, then
$\Sig(\alp_{n-1})$ is a proper subset of $\Sig(\alp)$; indeed, $\sig(\alp)$
is an element of the later, but not of the former set. It follows that
$|\Sig(\alp)|\ge n+1$ for any zero-sum-free sequence $\alp$ of length $n$.

We say that a nonempty, zero-sum-free sequence $\alp$ is \emph{sharp} if
$\Sig(\alp)=\Sig(\alp_{n-1})\cup\{\sig(\alp)\}$; in other words, $\alp$ is
sharp if it is nonempty, zero-sum-free, and
$|\Sig(\alp)|=|\Sig(\alp_{n-1})|+1$, where $n$ is the length of $\alp$. (This
terminology is different from that used in~\refb{sc}.)

The following claim contains some basic observations, of which the first two
originate from~\refb{sf}, and the third one from \refb{sc} where the proofs
of all three can be found.

For a group element $g$, we denote by $\<g\>$ the cyclic subgroup generated
by $g$; thus, if the order of $g$ is finite, then the subgroup is finite,
too, and $\ord(g)=|\<g\>|$.
\begin{claim}[Smith-Freeze, Savchev-Chen]\label{m:basic}
If $\alp=(a_1\longc a_n)$ is a sharp, zero-sum-free sequence of elements of
an abelian group, then
\begin{itemize}
\item[(a)] $\Sig(\alp_{n-1})$ is a union of the arithmetic progression
    $\{0,a_n,2a_n\longc sa_n\}$, where $1\le s<\ord(a_n)-1$, and a
    (possibly, zero) number of nonzero $\<a_n\>$-cosets;
\item[(b)] $\sig(\alp_{n-1})=sa_n$;
\item[(c)] $a_n$ is the unique group element such that appending it to
    $\alp_{n-1}$ as a last term yields a sharp, zero-sum-free sequence.
\end{itemize}
\end{claim}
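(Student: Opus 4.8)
The plan is to analyze how the last term $a_n$ interacts with the set $S:=\Sig(\alp_{n-1})$; write $b:=a_n$ throughout and assume $n\ge2$ (the case $n=1$ is degenerate and sits outside the substantive content of the claim). Since every subsequence sum of $\alp$ either omits or includes $b$, we have $\Sig(\alp)=S\cup(S+b)$, and two facts drive everything. First, from zero-sum-freeness, $-b\notin S$: otherwise a subsequence of $\alp_{n-1}$ summing to $-b$, together with $a_n$, would be a nonempty zero-sum subsequence; meanwhile $0\in S$ trivially. Second, from sharpness, $|S\cup(S+b)|=|S|+1$, so $|S\cap(S+b)|=|S|-1$ and hence $|S\stm(S+b)|=1$. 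As $0\in S\stm(S+b)$ by the first fact, this single element is exactly $0$, giving $S\stm(S+b)=\{0\}$. Equivalently, every nonzero $x\in S$ satisfies $x-b\in S$ — this closure property is the structural engine. Counting the other direction, $(S+b)\stm S$ is also a single element $w$, and since $w\in\Sig(\alp)\stm S=\{\sig(\alp)\}$ we have $w=\sig(\alp)$; thus $S+b=(S\stm\{0\})\cup\{\sig(\alp)\}$.

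Next I would read off (a) by slicing $S$ along the cosets of $H:=\<a_n\>$. On any coset the map $x\mapsto x-b$ is the ``subtract $b$'' shift, which is a single $\ord(b)$-cycle when $\ord(b)$ is finite. For a nonzero coset $C\ne H$ every element of $S\cap C$ is nonzero, so $S\cap C$ is closed under this shift; being a nonempty subset invariant under a single cycle, it must be all of $C$. Thus each nonzero coset is either disjoint from $S$ or entirely contained in it — these are the nonzero $\<a_n\>$-cosets of the statement. On $H$ itself the same closure (for nonzero elements) forces $S\cap H$ to be downward closed under subtraction of $b$, i.e. $S\cap H=\{0,b,2b\longc sb\}$. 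Since $b\in S+b=(S\stm\{0\})\cup\{\sig(\alp)\}$ and $b\ne\sig(\alp)$ (because $\sig(\alp_{n-1})\ne0$), we get $b\in S$, so $s\ge1$; and $-b=(\ord(b)-1)b\notin S$ forces $s<\ord(b)-1$. When $\ord(b)=\infty$ the shift has no finite cycle, so finiteness of $S$ excludes nonzero cosets altogether, and the same argument inside the infinite cyclic group $H$ yields the progression.

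Part (b) then follows by identifying the new element $(S+b)\stm S$ concretely. Every full coset satisfies $C+b=C$, so translating $S$ by $b$ changes only its $H$-part, and $(S\cap H)+b=\{b,2b\longc(s+1)b\}$; hence $(S+b)\stm S=\{(s+1)b\}$. Comparing with the identification above, $\sig(\alp)=(s+1)b$, and therefore $\sig(\alp_{n-1})=\sig(\alp)-b=sb$, which is exactly (b).

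Finally, for the uniqueness in (c), suppose appending some $c$ to $\alp_{n-1}$ again yields a sharp, zero-sum-free sequence. The argument establishing the two driving facts applies verbatim to $c$, giving $-c\notin S$, the closure $x-c\in S$ for every nonzero $x\in S$, and $c\in S$ (as $c\in S+c=(S\stm\{0\})\cup\{\sig(\alp_{n-1})+c\}$ with $c\ne\sig(\alp_{n-1})+c$); moreover the new subsequence sum produced is $\sig(\alp_{n-1})+c=sb+c$, which must lie outside $S$. The one delicate point, which I expect to be the main obstacle, is that a priori $c$ could sit inside one of the full nonzero cosets. This is dispatched by a coset computation: if $c\in C$ for a full coset $C\seq S$, then $sb+c\in H+C=C\seq S$, contradicting $sb+c\notin S$. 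Hence $c\in S\cap H=\{0,b\longc sb\}$, say $c=jb$ with $1\le j\le s$; reading the conditions $-c\notin S$ and $b-c=(1-j)b\in S$ inside the progression $\{0,b\longc sb\}$ with $s<\ord(b)-1$ forces $j=1$, that is $c=b=a_n$. The last step is a routine finite cyclic-group computation; the conceptual crux is the coset argument confining $c$ to $\<a_n\>$ in the first place.
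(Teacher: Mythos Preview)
The paper does not prove this claim itself; it attributes (a) and (b) to Smith--Freeze and (c) to Savchev--Chen, directing the reader to~\refb{sc} for all three proofs. Your argument is correct and self-contained: the driving observation that sharpness together with $-a_n\notin\Sig(\alp_{n-1})$ forces $\Sig(\alp_{n-1})\stm(\Sig(\alp_{n-1})+a_n)=\{0\}$, hence closure of $\Sig(\alp_{n-1})$ under $x\mapsto x-a_n$ on nonzero elements, is precisely the mechanism behind these results, and your coset-by-coset reading of it cleanly delivers (a)--(c). Two small remarks. First, your exclusion of $n=1$ is warranted --- as literally stated, (a) fails there since $\Sig(\alp_0)=\{0\}$ cannot contain $a_n$. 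Second, your proof actually gives $\Sig(\alp_{n-1})\cap\<a_n\>=\{0,a_n\longc sa_n\}$ with $0$ included, which is a hair more precise than the paper's phrasing and is what the downstream claims in fact use.
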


The inequality $s<\ord(a_n)-1$ in (a) should be interpreted as ``either the
order of $a_n$ is infinite, or the order is finite and then the stated
inequality holds''. Other inequalities of this kind are used throughout
without further explanations.

Our next claim is a version of \cite[Lemma~6]{b:sc} extended onto arbitrary
abelian groups (not necessarily cyclic or finite), and with the length
assumption relaxed to the assumption that the subsequence sum set is
small. 
\begin{claim}\label{m:badstart}
Suppose that $\alp=(a_1\longc a_n)$ is a zero-sum-free sequence with
$|\Sig(\alp)|<2n$. If $|\Sig(\alp_m)|\ge 2m$ for some $m\in[1,n-1]$, then the
terms $a_{m+1}\longc a_n$ of $\alp$ can be permuted (while keeping the order
of the first $m$ terms) so that the resulting sequence is sharp.
\end{claim}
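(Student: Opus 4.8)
The plan is to reduce the statement to a single deletion property and then exploit the structure guaranteed by Claim~\refm{basic}. Since permuting the terms of $\alp$ does not change $\Sig(\alp)$, and since after a permutation the last term occupies position $n$ and is therefore one of $a_{m+1}\longc a_n$, a permuted sequence is sharp if and only if its last term $a_j$ satisfies $|\Sig(\alp\setminus a_j)|=|\Sig(\alp)|-1$, where $\alp\setminus a_j$ denotes $\alp$ with the single term $a_j$ deleted. As $\sig(\alp)$ always lies in $\Sig(\alp)\setminus\Sig(\alp\setminus a_j)$ (a representation of $\sig(\alp)$ avoiding $a_j$ would yield a nonempty zero-sum subsequence), deleting any term drops $|\Sig|$ by at least one. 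Thus the claim is equivalent to the assertion that \emph{some} term $a_j$ with $j>m$ has the property that its deletion destroys exactly one subsequence sum; I will call such a term \emph{removable}.

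I make two preliminary reductions. First, replacing $m$ by the largest index in $[1,n-1]$ with $|\Sig(\alp_m)|\ge 2m$ only strengthens the conclusion, since a permutation fixing a longer prefix also fixes $\alp_m$ and permutes a sub-multiset of $a_{m+1}\longc a_n$; so I may assume $m$ maximal. Then $|\Sig(\alp_{m+1})|\le 2m+1$, while $|\Sig(\alp_{m+1})|\ge|\Sig(\alp_m)|+1\ge 2m+1$, forcing $|\Sig(\alp_m)|=2m$ and making $\alp_{m+1}$ sharp. Applying Claim~\refm{basic} to $\alp_{m+1}$ with $b:=a_{m+1}$ then describes $\Sig(\alp_m)$ as $\{0,b,2b\longc sb\}$ together with a union of nonzero $\<b\>$-cosets, with $\sig(\alp_m)=sb$, and singles out $b$ as the unique element whose appending to $\alp_m$ preserves sharpness. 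Second, I record an exchange principle: if $a_r$ is removable in $\alp$ and $a_j$ is removable in $\alp\setminus a_r$, then $a_j$ is removable in $\alp$. Indeed $|\Sig(\alp\setminus a_r\setminus a_j)|=|\Sig(\alp)|-2$, and restoring $a_r$ gives $|\Sig(\alp\setminus a_j)|\ge|\Sig(\alp\setminus a_j\setminus a_r)|+1=|\Sig(\alp)|-1$, which together with the reverse inequality yields equality.

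With these in hand I would argue by induction on $n$. The base case $n-m=1$ is immediate: the single appended step raises $|\Sig|$ by $|\Sig(\alp)|-|\Sig(\alp_m)|<2n-2m=2$, hence by exactly one, so $\alp$ is already sharp. For the inductive step (so $m\le n-2$), delete $a_n$ and apply the claim to $\alp_{n-1}$, which by maximality of $m$ still satisfies $|\Sig(\alp_{n-1})|<2(n-1)$ while $|\Sig(\alp_m)|\ge 2m$; this produces a removable free term $a_j$ of $\alp_{n-1}$. If $a_n$ happens to be removable in $\alp$, the exchange principle promotes $a_j$ to a removable free term of $\alp$, and we are done.

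The main obstacle is the remaining case, in which every term $a_{m+1}\longc a_n$ is non-removable in $\alp$ (in particular $a_n$ is not removable). Here I would show that $b=a_{m+1}$ is nonetheless removable, contradicting the assumption and closing the induction. The idea is that the progression-plus-cosets shape of $\Sig$ relative to $\<b\>$ persists as $a_{m+2}\longc a_n$ are appended: the $\<b\>$-part stays an initial segment $\{0,b\longc s'b\}$ and every $\<b\>$-coset present remains stable under translation by $b$. Granting this, $\Sig(\alp\setminus b)$ meets $\<b\>$ in $\{0,b\longc s'b\}$ with $s'b=\sig(\alp\setminus b)$, so restoring $b$ enlarges $\Sig$ by the single element $(s'+1)b=\sig(\alp)$, making $b$ removable. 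Establishing this persistence is the crux: a free term outside $\<b\>$ could a priori spawn or merge cosets, and the delicate point is to rule out, using the uniqueness in Claim~\refm{basic}(c) together with the global bound $|\Sig(\alp)|<2n$, any configuration that would force $b$ into more than one subsequence sum. I expect to carry this out by a secondary induction peeling off one appended term at a time while tracking the single progression and the coset partition.
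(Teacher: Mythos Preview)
Your reformulation via ``removable'' terms is fine, and maximizing $m$ is the right first move, but you weaken it by maximizing only along the \emph{given} ordering.  The paper instead (implicitly) maximizes over all permutations of the tail $a_{m+1}\longc a_n$: if for some $j>m$ the sequence $(\alp_m,a_j)$ were not sharp, then $|\Sig(\alp_m,a_j)|\ge 2(m{+}1)$, and after moving $a_j$ into position $m{+}1$ one is proving the claim for the larger index $m{+}1$---whose conclusion is at least as strong.  With this stronger maximality, $(\alp_m,a_j)$ is sharp for \emph{every} $j>m$, and the uniqueness in Claim~\refm{basic}(c) that you already quote forces $a_{m+1}\longe a_n$.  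From there the argument is one line: writing $c:=a_n$, the set $\Sig(\alp_m)$ is a progression $\{0,c\longc sc\}$ together with nonzero $\<c\>$-cosets, and appending another copy of $c$ merely extends the progression by one step; hence every $\alp_k$ with $m<k\le n$ is sharp.  You were one sentence away from this and did not take it.

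As written, your proposal has a genuine gap in the ``main obstacle'' case.  You assert that the progression-plus-cosets shape of $\Sig$ relative to $\<b\>$ persists as $a_{m+2}\longc a_n$ are appended, and that consequently $\Sig(\alp\setminus b)\cap\<b\>=\{0,b\longc s'b\}$ with $s'b=\sig(\alp\setminus b)$.  But with only the weak (fixed-order) maximality, nothing prevents some $a_j$ with $j>m$ from lying outside $\<b\>$; then $\sig(\alp\setminus b)\notin\<b\>$ and the equation $s'b=\sig(\alp\setminus b)$ is not even well-posed.  More structurally, a tail term $g\notin\<b\>$ shifts the partial progression $\{0\longc(s{+}1)b\}$ into the coset $g+\<b\>$, which need not be a full coset of $\<b\>$, so the ``union of full cosets outside $\<b\>$'' description can break at the very first such step.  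You flag this as the crux and defer it to a ``secondary induction'' that is never carried out.  The clean fix is not to pursue that induction but to strengthen the maximality as above; once all tail terms equal $b$, persistence is trivial and the exchange principle and the outer induction become unnecessary.
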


\begin{proof}
We can assume that $m$ is maximal subject to $|\Sig(\alp_m)|\ge 2m$; thus, in
particular, the sequence obtained from $\alp_m$ by appending any of the terms
$a_{m+1}\longc a_n$ is sharp. By Claim~\refm{basic}(c), we have
$a_{m+1}\longe a_n$, and by Claim~\refm{basic}(a), the set $\Sig(\alp_m)$ is
a union of nonzero cosets of the subgroup $\<a_n\>$ and an arithmetic
progression $\{0,a_n,2a_n\longc sa_n\}$ where $1\le s<\ord(a_n)-1$. Therefore
each of $\Sig(\alp_{m+1})\longc\Sig(\alp_{n})$ has the very same structure,
the only difference between these sets being that each time we pass from
$\alp_k$ to $\alp_{k+1}$, the value of $s$ increases by $1$. It follows that
for each $k\in[m+1,n]$, and in particular for $k=n$, the sequence $\alp_k$ is
sharp.
\end{proof}

\begin{claim}\label{m:lastunique}
Suppose that the zero-sum-free sequences $\alp=(a_1\longc a_n)$ and
$\alp'=(a_1'\longc a_n')$ are rearrangements of each other. If both $\alp$
and $\alp'$ are sharp, then $a_n=a_n'$.
\end{claim}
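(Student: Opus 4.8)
The plan is to reduce everything to the uniqueness statement of Claim~\refm{basic}(c). The one fact that drives the argument is that $\Sig(\cdot)$ and $\sig(\cdot)$ depend only on the multiset of terms of a sequence: since $\alp$ and $\alp'$ are rearrangements of each other, $\Sig(\alp)=\Sig(\alp')$ and $\sig(\alp)=\sig(\alp')$.

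First I would show that the two truncations have the same subsequence-sum set. For any sharp zero-sum-free sequence $\gamma=(c_1\longc c_k)$ one has $\Sig(\gamma_{k-1})=\Sig(\gamma)\stm\{\sig(\gamma)\}$, since by sharpness $\Sig(\gamma)=\Sig(\gamma_{k-1})\cup\{\sig(\gamma)\}$ while $\sig(\gamma)\in\Sig(\gamma)\stm\Sig(\gamma_{k-1})$, as recorded in the text. Applying this to both $\alp$ and $\alp'$ and invoking $\Sig(\alp)=\Sig(\alp')$ together with $\sig(\alp)=\sig(\alp')$ yields $\Sig(\alp_{n-1})=\Sig(\alp'_{n-1})$; denote this common set by $S$. (If $n=1$ the two sequences are literally equal and there is nothing to prove, so I assume $n\ge2$.)

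Next I would append $a_n'$ to $\alp_{n-1}$, forming the ``hybrid'' sequence $\beta=(a_1\longc a_{n-1},a_n')$, and verify that it is sharp and zero-sum-free. It is zero-sum-free: a nonempty subsequence of $\beta$ either lies in the zero-sum-free $\alp_{n-1}$, or has sum $t+a_n'$ with $t\in\Sig(\alp_{n-1})=S$, and since $\alp'$ is zero-sum-free with $\Sig(\alp'_{n-1})=S$ we have $-a_n'\notin S$, so no such sum vanishes. It is sharp because $\Sig(\beta)=S\cup(S+a_n')=\Sig(\alp'_{n-1})\cup(\Sig(\alp'_{n-1})+a_n')=\Sig(\alp')$, whence $|\Sig(\beta)|=|\Sig(\alp')|=|S|+1=|\Sig(\beta_{n-1})|+1$.

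Finally, both $a_n$ and $a_n'$ are group elements whose appending to $\alp_{n-1}$ produces a sharp zero-sum-free sequence, namely $\alp$ and $\beta$. By the uniqueness asserted in Claim~\refm{basic}(c) for $\alp$, there is exactly one such element, and therefore $a_n'=a_n$. The step demanding the most care is the transfer in the second paragraph: $\alp_{n-1}$ and $\alp'_{n-1}$ are genuinely different sequences, so the whole scheme rests on the fact that only their subsequence-sum sets need coincide in order to route the data about $a_n'$ (coming from $\alp'$) through $\alp_{n-1}$ and make part~(c) applicable.
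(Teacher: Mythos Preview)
Your argument is correct, and it takes a genuinely different route from the paper's proof. The paper does not invoke Claim~\refm{basic}(c) at all; instead it works directly with the structural description from Claim~\refm{basic}(a): $\Sig(\alp)$ is the disjoint union of a progression $P=\{a_n,\dotsc,(s+1)a_n\}$ inside $\<a_n\>$ together with some full $\<a_n\>$-cosets, and sharpness of $\alp'$ (combined with $\Sig(\alp')=\Sig(\alp)$) forces $|(\Sig(\alp)+a_n')\stm\Sig(\alp)|=1$. A short coset argument then pins the single ``escaping'' element to the zero coset $\<a_n\>$, yielding $|(P+a_n')\stm P|=1$, from which $a_n'=a_n$ is read off after excluding $a_n'=-a_n$ and $|\<a_n\>\stm P|=1$ via zero-sum-freeness.

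Your approach is cleaner: by first establishing $\Sig(\alp_{n-1})=\Sig(\alp'_{n-1})$ and then building the hybrid $\beta=(a_1,\dotsc,a_{n-1},a_n')$, you reduce the whole claim to a single application of the uniqueness in Claim~\refm{basic}(c), treating the structural analysis behind (c) as a black box rather than reproducing it. The paper's argument is more hands-on and uses only part~(a), essentially rederiving the relevant instance of~(c); yours is shorter and makes the logical dependence on the Smith--Freeze/Savchev--Chen lemma more transparent.
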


\begin{proof}
By the definition of a sharp sequence, we have
$\Sig(\alp)=\Sig(\alp_{n-1})\cup\{\sig(\alp)\}$ and
$\Sig(\alp')=\Sig(\alp_{n-1}')\cup\{\sig(\alp')\}$ where the unions are
disjoint. Since $\Sig(\alp')=\Sig(\alp)$ and $\sig(\alp')=\sig(\alp)$, we
conclude that $\Sig(\alp_{n-1}')=\Sig(\alp_{n-1})$.

Consider the sequence $\alp'':=(a_1\longc a_{n-1},a_n')$. This sequence is
zero-sum-free in view of  $\Sig(\alp_{n-1})=\Sig(\alp_{n-1}')$, and it is
sharp as
\begin{multline*}
  \big| \big( \Sig(\alp_{n-1}'')+a_n'\big) \stm \Sig(\alp_{n-1}'')\big|
      = \big|\big( \Sig(\alp_{n-1})+a_n'\big) \stm \Sig(\alp_{n-1})\big| \\
       = \big|\big( \Sig(\alp_{n-1}')+a_n'\big) \stm \Sig(\alp_{n-1}')\big|
           = 1.
\end{multline*}
The assertion follows now from Claim~\refm{basic} (c).
\end{proof}

%
%

We say that a sequence $\alp=(a_1\longc a_n)$ of elements of an abelian group
is \emph{connected} if each term of $\alp$ is an algebraic sum of some of the
preceeding terms; that is, if $a_k\in\Sig(\alp_{k-1})-\Sig(\alp_{k-1})$, for
all $k\in[2,n]$. If, indeed, $a_k\in\Sig(\alp_{k-1})$ holds, then we say that
$\alp$ is \emph{strongly} connected.

If $\xi=(x_1\longc x_n)$ is a strongly connected sequence of nonnegative
integers, then $x_k\le\sig(\xi_{k-1})$. If, in addition, $x_1=1$, then from
$\Sig(\xi_k)=\Sig(\xi_{k-1})\cup(\Sig(\xi_{k-1})+x_k)$ it follows that
$\Sig(\xi_k)=[0,\sig(\xi_k)]$, for all $k\in[1,n]$. (In the terminology
of~\refb{sc}, the equality $\Sig(\xi_k)=[0,\sig(\xi_k)]$ means that $\xi$ is
\emph{behaving}.)

\begin{claim}\label{m:goodend}
Every zero-sum-free sequence $\alp$ with $|\Sig(\alp)|<2n$, where $n$ is the
length of $\alp$, can be rearranged so that the resulting sequence is either
connected, or sharp.
\end{claim}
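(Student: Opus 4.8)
The plan is to recast the notion of connectedness as a statement about how fast the sets $\Sig(\alp_k)$ grow, and then to construct a connected rearrangement greedily, falling back on Claim~\refm{badstart} whenever the greedy procedure stalls.

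First I would record the identity $\Sig(\alp_k)=\Sig(\alp_{k-1})\cup(\Sig(\alp_{k-1})+a_k)$, which holds for any sequence. The two sets on the right have equal cardinality, so their union has exactly $2|\Sig(\alp_{k-1})|$ elements when they are disjoint and strictly fewer otherwise. Now $\Sig(\alp_{k-1})$ and $\Sig(\alp_{k-1})+a_k$ share a common element exactly when $a_k\in\Sig(\alp_{k-1})-\Sig(\alp_{k-1})$, that is, exactly when appending $a_k$ keeps the sequence connected. Hence $a_k$ is connected to $\alp_{k-1}$ if and only if $|\Sig(\alp_k)|<2|\Sig(\alp_{k-1})|$, whereas a non-connected term \emph{doubles} the sum set: $|\Sig(\alp_k)|=2|\Sig(\alp_{k-1})|$. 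This equivalence is what ties the two alternatives of the claim to the quantitative hypothesis $|\Sig(\alp)|<2n$.

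Next I would build a maximal connected prefix greedily: pick any term as the first one, and keep appending a so-far-unused term of $\alp$ that lies in the difference set of the chosen terms, for as long as this is possible. Let $\beta=(b_1\longc b_m)$ be the prefix at which the process halts. If $m=n$, the resulting rearrangement is connected and we are finished. Otherwise every unused term $c$ fails to be connected to $\beta$, so by the first step $|\Sig((b_1\longc b_m,c))|=2|\Sig(\beta)|$; and since $\beta$ is a zero-sum-free sequence of length $m$, it has at least $m+1$ subsequence sums, whence $|\Sig((b_1\longc b_m,c))|\ge 2(m+1)$. I would now form the rearrangement of $\alp$ whose first $m+1$ terms are $b_1\longc b_m,c$ in this order and whose remaining terms are the other unused ones in any order. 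Its length-$(m+1)$ prefix has at least $2(m+1)$ subsequence sums, so if $m+1\le n-1$ then Claim~\refm{badstart} lets me permute the terms beyond position $m+1$ to make the whole sequence sharp, which is the second alternative. The only case left is $m=n-1$, but then the displayed rearrangement is all of $\alp$ and would have at least $2n$ subsequence sums, contradicting $|\Sig(\alp)|<2n$; so this borderline case never arises.

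The step requiring the most care is matching the bookkeeping to the hypotheses of Claim~\refm{badstart}: one must guarantee that the prefix with the large sum set sits at a position at most $n-1$, so that terms remain to be permuted. The interplay between the universal bound $|\Sig(\beta)|\ge m+1$ and the global bound $|\Sig(\alp)|<2n$ is precisely what excludes $m=n-1$ and secures the needed index, so that the greedy prefix is either of full length (connected) or stalls at some $m\le n-2$ (sharp).
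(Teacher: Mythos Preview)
Your argument is correct and follows essentially the same route as the paper: build a maximal connected prefix, note that any leftover term must double the sum set so that $|\Sig(\alp_{m+1})|\ge 2(m+1)$, and then invoke Claim~\refm{badstart}. If anything, your write-up is more careful than the paper's in making the doubling equivalence explicit and in disposing of the borderline case $m=n-1$, which the paper leaves implicit.
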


\begin{proof}
Suppose that $\alp=(a_1\longc a_n)$ where the elements are numbered so that
$\alp$ starts with the longest connected sequence possible. If this longest
connected sequence has length $n$, then $\alp$ is connected; suppose thus
that the length is $m-1$, where $m\le n$. Then $a_m$ cannot be represented as
a difference of two elements of $\Sig(\alp_{m-1})$, meaning that the sets
$\Sig(\alp_{m-1})$ and $\Sig(\alp_{m-1})+a_{m}$ are disjoint. Therefore
$|\Sig(\alp_{m})|\ge 2|\Sig(\alp_{m-1})|\ge 2m$. The result now follows from
Claim~\refm{badstart}.
\end{proof}

For an integer set $S$ and an element $a$ of an abelian group, we write
$S\cdot a:=\{sa\colon s\in S\}$.

It is easy to see that if $\alp=(a_1\longc a_n)$ is connected and
zero-sum-free, then for each $k\in[1,n]$ there is an integer $x_k$ such that
$a_k=x_ka_1$ and moreover, $x_1=1$, and if $a_1$ has finite order, then $0\le
x_k<\ord(a_1)$. We say that the sequence of integers $\xi=(x_1\longc x_n)$ is
\emph{associated} with $\alp$. Clearly, in this situation we have
$\Sig(\alp)=\Sig(\xi)\cdot a_1$.

\begin{claim}\label{m:int10}
If $\alp=(a_1\longc a_n)$ is a zero-sum-free, connected sequence of elements
of an abelian group, then the associated integer sequence $\xi$ has positive
terms and is \emph{strongly} connected. Moreover, $\sig(\xi)<\ord(a_1)$.
\end{claim}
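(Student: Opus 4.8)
The plan is to induct on the length, establishing simultaneously for each initial segment $\xi_k=(x_1\longc x_k)$ the three assertions that its terms are positive, that it is strongly connected, and that $\sig(\xi_k)<\ord(a_1)$. Carrying all three together pays off because, by the remark preceding the claim, positivity and strong connectedness of $\xi_{k-1}$ already force $\Sig(\xi_{k-1})=[0,\sig(\xi_{k-1})]$, an interval of consecutive integers; adjoining $\sig(\xi_{k-1})<\ord(a_1)$ makes the integers $0\longc\sig(\xi_{k-1})$ pairwise incongruent modulo $\ord(a_1)$, so $t\mapsto ta_1$ is injective on them and $\Sig(\alp_{k-1})=\Sig(\xi_{k-1})\cdot a_1$ is a genuine arithmetic progression with difference $a_1$. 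For the base case, $a_1\ne0$ (a single zero term would be a nonempty zero-sum subsequence), whence $x_1=1>0$ and $\ord(a_1)>1=\sig(\xi_1)$.

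For the inductive step I would first pin down $x_k$. Connectedness of $\alp$ gives $a_k\in\Sig(\alp_{k-1})-\Sig(\alp_{k-1})$, and inserting the progression description of $\Sig(\alp_{k-1})$ rewrites this as $x_ka_1=ta_1$ for some integer $t$ with $|t|\le\sig(\xi_{k-1})$; equivalently $x_k\equiv t\pmod{\ord(a_1)}$. Since $a_k\ne0$ we have $x_k\ne0$, so the whole step reduces to proving $1\le x_k\le\sig(\xi_{k-1})$: this is positivity, and since $[1,\sig(\xi_{k-1})]\seq\Sig(\xi_{k-1})$ it simultaneously gives $x_k\in\Sig(\xi_{k-1})$, i.e.\ strong connectedness of $\xi_k$.

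The main obstacle, and the sole place where zero-sum-freeness is indispensable, is to exclude the ``wrap-around'' that can occur in the finite-order case, where $t$ might be negative and the canonical representative of $x_k$ would then overshoot $\sig(\xi_{k-1})$. I would rule this out directly: if $t<0$, then $-t\in[1,\sig(\xi_{k-1})]=\Sig(\xi_{k-1})\stm\{0\}$, so $-t=\sum_{i\in I}x_i$ for some nonempty $I\seq[1,k-1]$, and the subsequence indexed by $I\cup\{k\}$ sums to $\bigl(x_k+\sum_{i\in I}x_i\bigr)a_1=(x_k-t)a_1=0$, the last equality because $x_k\equiv t\pmod{\ord(a_1)}$; this contradicts zero-sum-freeness. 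Hence $t\ge0$, and as $x_k$ and $t$ both lie in $[0,\ord(a_1))$ and are congruent modulo $\ord(a_1)$, we conclude $x_k=t\in[1,\sig(\xi_{k-1})]$, as needed. (In the infinite-order case there is nothing to choose, $x_k=t$ outright, and the same computation excludes $t<0$.)

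It remains to re-establish the sum bound at level $k$, where no circularity arises since I only invoke the level-$k$ facts just proved. With $\xi_k$ positive and strongly connected, $\Sig(\xi_k)=[0,\sig(\xi_k)]$, so every integer $v\in[1,\sig(\xi_k)]$ is the sum of a nonempty subsequence of $\xi_k$; zero-sum-freeness of $\alp$ then forces $va_1\ne0$, that is $v\not\equiv0\pmod{\ord(a_1)}$. Were $\sig(\xi_k)\ge\ord(a_1)$, the choice $v=\ord(a_1)$ would violate this, so $\sig(\xi_k)<\ord(a_1)$. This closes the induction, and the case $k=n$ delivers all three conclusions, in particular $\sig(\xi)<\ord(a_1)$.
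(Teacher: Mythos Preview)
Your proof is correct and follows essentially the same approach as the paper's: both argue by induction on the length, use the induction hypothesis to identify $\Sig(\alp_{k-1})$ with the progression $[0,\sig(\xi_{k-1})]\cdot a_1$, invoke zero-sum-freeness to exclude negative values of the difference (the paper phrases this as $a_n\notin-\Sig(\alp_{n-1})$, you spell out the offending zero-sum subsequence explicitly), and finish the sum bound by the same ``$0\notin[1,\sig(\xi)]\cdot a_1$'' observation.
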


\begin{proof}
We use induction on $n$. The case $n=1$ is trivial. Suppose that $n>1$ and
write $\xi=(x_1\longc x_n)$. By the induction hypothesis, $x_1\longc
x_{n-1}\ge 1$, $\sig(\xi_{n-1})<\ord(a_1)$, and $\xi_{n-1}$ is strongly
connected, as a result of which $\Sig(\xi_{n-1})=[0,\sig(\xi_{n-1})]$;
consequently, $\Sig(\alp_{n-1})=[0,\sig(\xi_{n-1})]\cdot a_1$.

Since $\alp$ is connected, we have
  $$ a_n \in \Sig(\alp_{n-1})-\Sig(\alp_{n-1})
                        = [-\sig(\xi_{n-1}),\sig(\xi_{n-1})]\cdot a_1. $$
On the other hand, since $\alp$ is zero-sum-free,
$a_n\notin-\Sig(\alp_{n-1})=[-\sig(\xi_{n-1}),0]\cdot a_1$. It follows that
  $$ x_na_1 = a_n \in [1,\sig(\xi_{n-1})]\cdot a_1. $$
If the order of $a_1$ is infinite, then we immediately conclude that
$x_n\in[1,\sig(\xi_{n-1})]$. If $a_1$ is of finite order, then we arrive at
the same conclusion choosing $t\in[1,\sig(\xi_{n-1})]$ to satisfy
$x_na_1=ta_1$ and observing that $1\le x_n,t<\ord(a_1)$. Recalling that
$\Sig(\xi_{n-1})=[0,\sig(\xi_{n-1})]$, we derive that
$x_n\in\Sig(\xi_{n-1})$; hence $\xi$ is strongly connected and, consequently,
$\Sig(\xi)=[0,\sig(\xi)]$. This shows that every integer from the interval
$[1,\sig(\xi)]$ is the sum of the elements of a subsequence of $\xi$.
Clearly, this subsequence is nonempty. Therefore, every element of the set
$[1,\sig(\xi)]\cdot a_1$ is the sum of the elements of a nonempty subsequence
of $\alp$. Since $\alp$ is zero-sum-free, we conclude that $0\notin
[1,\sig(\xi)]\cdot a_1$, implying the last assertion.
\end{proof}

\begin{claim}\label{m:int2}
If $\xi$ is a strongly connected sequence of positive integers, then so is
the nondecreasing rearrangement of $\xi$.
\end{claim}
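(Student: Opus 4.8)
The plan is to reduce the claim to a purely numerical condition on the sorted sequence and then to argue by a first-occurrence (positional) argument inside the \emph{given} ordering. Write $\xi=(x_1\longc x_n)$ and let $\eta=(y_1\longle y_n)$ be its non-decreasing rearrangement. Since $\Sig$ does not depend on the order of the terms, $\Sig(\eta)=\Sig(\xi)=[0,\sig(\xi)]$, so $\eta$ is behaving; I stress, however, that behaving-ness \emph{alone} is not enough, as the sorted sequence $(1,2,4,8\longc)$ is behaving but not strongly connected, so the stronger hypothesis on $\xi$ must genuinely be used. Recalling from the paragraph preceding Claim~\refm{goodend} that a sequence of positive integers is strongly connected exactly when its first term is $1$ and each later term is at most the sum of its predecessors (the forward direction is the displayed computation there, and the converse follows by the same one-line induction), it suffices to prove that $y_1=1$ and $y_k\le y_1\longp y_{k-1}$ for every $k\in[2,n]$. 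The first condition is immediate: as $\xi$ is strongly connected we have $x_1=1$, and since all terms are positive, $y_1=\min_i x_i=1$.

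For the inequalities I argue by contradiction: suppose $y_k> y_1\longp y_{k-1}$ for some $k\in[2,n]$, and put $T:=y_1\longp y_{k-1}=\sig(\eta_{k-1})$. Monotonicity forces the jump to be strict, for if $y_{k-1}=y_k$ then $T\ge y_{k-1}=y_k$, a contradiction; hence $y_{k-1}<y_k$, so the terms of the multiset that are strictly less than $y_k$ are exactly $y_1\longc y_{k-1}$, with total $T<y_k$, while $y_k\longc y_n$ are all $\ge y_k$. Now return to the strongly connected ordering $\xi$ and let $p$ be the least index with $x_p\ge y_k$. Each earlier term satisfies $x_i<y_k$ and is therefore one of $y_1\longc y_{k-1}$, so $x_1\longc x_{p-1}$ occupy $p-1$ distinct positions drawn from this block and
  $$ \sig(\xi_{p-1}) \le T < y_k \le x_p. $$
Thus $x_p>\max\Sig(\xi_{p-1})$, whence $x_p\notin\Sig(\xi_{p-1})$, contradicting the strong connectedness of $\xi$; here $p\ge 2$ automatically, since $x_1=1<y_k$ (indeed $y_k\ge T+1\ge 2$). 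Since no such $k$ can exist, $\eta$ satisfies both conditions and is strongly connected.

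The one delicate point — and the reason the statement is not entirely routine — is precisely that behaving-ness is invariant under rearrangement whereas strong connectedness is not. The argument therefore does not try to transport the interval property $\Sig(\eta_{k-1})=[0,\sig(\eta_{k-1})]$ across the rearrangement; instead it locates, \emph{within} the original strongly connected ordering, a term that is too large to lie in the set of subsequence sums of its predecessors. Everything else is bookkeeping: the strict-jump observation isolates the block of values below $y_k$, and the minimal index $p$ pins down the position at which $\xi$ would be forced to fail. I expect this first-occurrence step to be the crux, with the reduction to the two numerical conditions and the verification $y_1=1$ being straightforward.
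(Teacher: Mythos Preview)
Your core contradiction argument is correct and takes a genuinely different route from the paper. The paper argues structurally: for any value $v$ appearing in $\xi$ that is not the minimum, its \emph{first} occurrence $x_j$ (necessarily $j\ge 2$) lies in $\Sig(\xi_{j-1})$ and hence equals a nonempty sum of preceding terms, none of which is $v$; this forces at least two summands, each strictly smaller than $v$. Thus every non-minimal value is a sum of strictly smaller terms of the multiset, and in the sorted order all such terms precede every copy of $v$; together with the trivial case $y_k=y_{k-1}$ this gives $y_k\in\Sig(\eta_{k-1})$ directly. By contrast, you reduce to a numerical inequality $y_k\le\sig(\eta_{k-1})$ and then exhibit, inside the original ordering, a position $p$ at which $x_p>\sig(\xi_{p-1})$. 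The paper's route produces an explicit representing subset; yours is a clean extremal argument and arguably more transparent.

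One point, however, is not quite right: the equivalence you invoke --- strongly connected if and only if first term $1$ and $x_k\le\sig(\xi_{k-1})$ --- fails in general, e.g.\ for $\xi=(2,2)$. The definition of strongly connected places no condition on the first term, and the paragraph you cite does not establish $x_1=1$ (its displayed conclusion $\Sig(\xi_k)=[0,\sig(\xi_k)]$ for $k=1$ is itself only valid when $x_1=1$). This is easily repaired. The minimum of a strongly connected sequence of positive integers must occur at position $1$, since otherwise the first occurrence of the minimum would be a nonempty sum of strictly larger predecessors; hence $y_1=x_1$, and since every later term lies in $\Sig(\xi_{k-1})\subseteq x_1\mathbb{Z}$, dividing through by $x_1$ reduces to the case $x_1=1$, after which your characterization and the rest of the proof go through verbatim. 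In particular, the verification $p\ge 2$ needs only $x_1=y_1\le y_{k-1}<y_k$, not $x_1=1$.
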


\begin{proof}
Let $\xi'$ be the nondecreasing rearrangement of $\xi$. We know that every
term of $\xi$ is the sum of some of the other terms; therefore, every term is
in fact the sum of some strictly smaller terms. It follows that every term of
$\xi'$ is the sum of some of the preceding terms (since \emph{all} terms
smaller than the one under consideration precede it).
\end{proof}

As a direct consequence of Claims~\refm{int10} and~\refm{int2}, we have
\begin{claim}\label{m:connect}
If $\alp=(a_1\longc a_n)$ is a zero-sum-free, connected sequence of elements
of an abelian group, then the conclusion of Theorem~\reft{main} holds true:
having the elements of $\alp$ suitably renumbered, there are a group element
$a$ and integers $1=x_1\longle x_n$ with $x_1\longp x_n<\ord(a)$ such that
$a_k=x_ka$ for all $k\in[1,n]$, and $x_{k+1}\le x_1\longp x_k$ for all
$k\in[1,n-1]$. Moreover, $\Sig(\alp)=[0,x_1\longp x_n]\cdot a$.
\end{claim}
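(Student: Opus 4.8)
The plan is to assemble the preceding claims, since the structural conclusion is already encoded in the associated sequence. First I would invoke Claim~\refm{int10}: because $\alp$ is connected and zero-sum-free, its associated sequence $\xi=(x_1\longc x_n)$ consists of positive integers, is strongly connected, and satisfies $\sig(\xi)<\ord(a_1)$. Setting $a:=a_1$, the definition of the associated sequence gives $a_k=x_ka$ for every $k$, and from $a_1=x_1a_1$ I read off $x_1=1$ (using $0\le x_1<\ord(a_1)$ in the finite-order case, where $\ord(a_1)>1$ since $\alp$ is zero-sum-free).

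Next I would pass to the non-decreasing rearrangement of $\xi$. By Claim~\refm{int2} the sorted sequence is again strongly connected with positive terms; renumbering the terms of $\alp$ by the same permutation, I may assume $x_1\longle x_n$. Since $1$ occurs among the terms and all terms are positive integers, the smallest term is $1$, so $x_1=1$ after sorting as well. Strong connectedness of a sequence of positive integers makes it behaving, so $\Sig(\xi_k)=[0,\sig(\xi_k)]=[0,x_1\longp x_k]$ for each $k$.

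The desired inequalities now drop out: $x_{k+1}\in\Sig(\xi_k)=[0,x_1\longp x_k]$ yields $x_{k+1}\le x_1\longp x_k$ for all $k\in[1,n-1]$. For the subsequence-sum set I would combine the rearrangement-invariance of $\Sig$ with the identity $\Sig(\alp)=\Sig(\xi)\cdot a$ from the associated-sequence setup and with $\Sig(\xi)=[0,\sig(\xi)]$, obtaining $\Sig(\alp)=[0,x_1\longp x_n]\cdot a$. The bound $\sig(\xi)<\ord(a)$ is precisely the last assertion of Claim~\refm{int10}; as $x_1\longp x_n<\ord(a)$, the elements $0,a\longc(x_1\longp x_n)a$ are pairwise distinct, so this set is a genuine arithmetic progression.

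Since the claim is advertised as a direct consequence of Claims~\refm{int10} and~\refm{int2}, I do not anticipate any serious obstacle; the only points needing a moment of care are verifying that the smallest term equals $1$ so that $x_1=1$ persists after sorting, and matching the behaving inequality $x_{k+1}\le\sig(\xi_k)$ to the exact form required in the statement.
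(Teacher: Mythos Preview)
Your proposal is correct and follows exactly the route the paper intends: the claim is stated there as a direct consequence of Claims~\refm{int10} and~\refm{int2}, and you have simply filled in the details of how these two claims combine. The small points you flag---that $x_1=1$ survives the sorting and that strong connectedness yields the ``behaving'' equalities $\Sig(\xi_k)=[0,\sig(\xi_k)]$---are handled correctly.
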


Finally, we can prove our main result.
\begin{proof}[Proof of Theorem~\reft{main}]
If $\alp$ admits a connected rearrangement, then the result follows from
Claim~\refm{connect}; we thus assume that none of the rearrangements of
$\alp$ are connected and show that this assumption leads to a contradiction.

With Claim~\refm{goodend} in mind, we can assume that $\alp$ is sharp. Let
$c:=a_n$, and let $m$ be the multiplicity of $c$ in $\alp$; that is,
$m=|\{i\in[1,n]\colon a_i=c\}|$. Consider a rearrangement of $\alp$ beginning
with the term $c$ repeated $m$ times. Since this rearrangement is not
connected, there exists $k\in[2,n]$ such that $\Sig(\alp_{k-1})$ is disjoint
from $\Sig(\alp_{k-1})+a_k$, resulting in
$|\Sig(\alp_k)|=2|\Sig(\alp_{k-1})|\ge 2k$. By Claim~\refm{badstart} we can
further rearrange the terms of $\alp$, starting from the $(k+1)$-th term, so
that the resulting rearrangement $\alp'$ is sharp. By
Claim~\refm{lastunique}, the last term of $\alp'$ is then equal to $c$,
contradicting the way $\alp'$ is constructed.
\end{proof}

\section{High-multiplicity elements in zero-sum-free sequences with few
  subset sums}\label{s:app}

The Savchev-Chen-Yuan theorem has numerous applications. Using
Theorem~\reft{main} instead, one can improve or generalize some of the
corresponding results. We consider just one example.

Suppose that $\alp$ is a zero-sum-free sequence of length $n$ over the cyclic
group of order $m$. Bovey, Erd\H os, and Niven~\refb{ben} have shown that if
$n>\frac12\,m$, then $\alp$ contains a term of multiplicity at least
$2n-m+1$; as remarked in~\refb{ben}, this estimate is best possible whenever
$\frac23\,(m-1)\le n<m$. An improvement for the complementary range
$\frac12\,m<n\le\frac23\,(m-1)$ was obtained by Gao and
Geroldinger~\refb{gg}, and then Savchev and Chen~\refb{sc} applied their
result to prove an estimate which is sharp for all values of $n>\frac12\,m$;
namely, the maximum multiplicity is at least
  $$ \begin{cases}
         n-\lfl\frac{m-1}3\rfl, &\quad \frac12\,m<n\le\frac23\,(m-1), \\
         2n-m+1, &\quad \frac23\,(m-1)\le n<m.
     \end{cases} $$

Using Theorem~\reft{main} instead of Theorem~\reft{scy}, we prove
\begin{theorem}\label{t:mult}
Suppose that $\alp$ is a zero-sum-free sequence of length $n$ over an abelian
group. If $\alp$ has at most $m<2n$ distinct subsequence sums, then there is
a group element that appears in $\alp$ with the multiplicity at least
  $$ \max_{r\ge 1} \frac2r\(n-\frac{m-1}{r+1}\). $$
\end{theorem}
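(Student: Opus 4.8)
The plan is to feed the structural description of Theorem~\reft{main} into a short weighted counting argument. Since $|\Sig(\alp)|\le m<2n$, Theorem~\reft{main} applies and supplies a group element $a$ and integers $1=x_1\longle x_n$ with $a_k=x_ka$ and $\Sig(\alp)=\{0,a,2a\longc Sa\}$, where I set $S:=x_1\longp x_n$. If $a$ has finite order then $S<\ord(a)$, so in every case the listed multiples of $a$ are pairwise distinct and $|\Sig(\alp)|=S+1$; the hypothesis $|\Sig(\alp)|\le m$ then gives $S\le m-1$. Moreover, the multiplicity in $\alp$ of the element $va$ is at least the number of indices $k$ with $x_k=v$, so writing $v_1\longl v_t$ for the distinct values attained by the $x_k$, with respective multiplicities $\mu_1\longc\mu_t$, it suffices to bound from below $M:=\max_i\mu_i$ (each $\mu_i\le M$ by definition of $M$, and the element $v_ia$ witnesses multiplicity at least $\mu_i$).

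The arithmetic core is the trivial observation that, the $v_i$ being distinct positive integers listed increasingly, one has $v_i\ge i$ for every $i$. Hence
$$ S=\sum_i\mu_iv_i\ge\sum_i\mu_i\,i. $$
Fixing $r\ge 1$, I would compare $\sum_i\mu_i\,i$ with $(r+1)n=(r+1)\sum_i\mu_i$: the difference is $\sum_i\mu_i\,((r+1)-i)$, in which only the indices $i\le r$ carry a positive coefficient. Discarding the non-positive tail $i\ge r+1$ and using $\mu_i\le M$ on the head yields
$$ \sum_i\mu_i\,((r+1)-i)\le M\sum_{i=1}^r((r+1)-i)=\frac{r(r+1)}2\,M, $$
where if $t\le r$ the upper bound only grows by extending the head sum to $i=r$. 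Combining the two displays produces the single clean inequality $S\ge(r+1)n-\tfrac{r(r+1)}2 M$, valid for every $r\ge 1$.

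Finally I would solve for $M$: the last inequality rearranges to $M\ge\tfrac{2}{r}(n-\tfrac{S}{r+1})$, and since $S\le m-1$ this is at least $\tfrac{2}{r}(n-\tfrac{m-1}{r+1})$. As $r\ge 1$ was arbitrary, $M$ dominates the maximum over $r$, which is exactly the asserted bound. I do not expect a genuine obstacle; the one point requiring care is the choice of the right linear functional, namely weighting each value by its excess $(r+1)-i$ over the threshold $r$, so that the head/tail split delivers the factor $\tfrac{2}{r}$ rather than the factor $\tfrac1r$ that a cruder ``small values versus large values'' count would give. It is also worth recording the harmless boundary case $t\le r$ and the remark that, for the desired lower bound, any collision of distinct $v_i$ to a common element $v_ia$ (possible a priori when $\ord(a)$ is finite, though in fact excluded by $S<\ord(a)$) would only increase the maximal multiplicity.
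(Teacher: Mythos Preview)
Your argument is correct and follows essentially the same weighted--counting route as the paper: both pass through Theorem~\reft{main}, compare $\sum_i i\mu_i$ (or $\sum_j j\nu_j$) with $(r+1)n$, bound the positive part of the difference by $\tfrac{r(r+1)}2 M$, and solve for $M$. The only cosmetic difference is that the paper indexes the multiplicities by the actual value $j$ (so that $\sum_j j\nu_j=S$ is an equality), whereas you index by the rank $i$ of the distinct value and insert the harmless extra inequality $v_i\ge i$; this changes nothing in the final bound.
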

Since a zero-sum-free sequence in the cyclic group of order $m$ can have at
most $m$ distinct subsequence sums, the case $r=1$ gives the
 Boven-Erd\H os-Niven estimate, and taking $r=2$ we recover the Savchev-Chen
bound. Substituting $r\ge 3$ does not lead to any further improvement, which
is very expectable bearing in mind that the Savchev-Chen bound is tight.
Notice, however, that Theorem~\reft{mult} is applicable under substantially
weaker assumptions.

\begin{proof}[Proof of Theorem~\reft{mult}]
Applying Theorem~\reft{main}, and having $\alp$ appropriately ordered, we
write $\alp=(a_1\longc a_n)$ and $\xi=(x_1\longc x_n)$ where $1=x_1\longle
x_n$ are the integers appearing in the conclusion of Theorem~\reft{main}. For
each $j\ge 1$, denote by $\nu_j$ be the number of indices $i\in[1,n]$ with
$x_i=j$. We have $\nu_1+\nu_2+\nu_3+\dotsb=n$ and
$\nu_1+2\nu_2+3\nu_3+\dotsb=\sig(\xi)=|\Sig(\alp)|-1\le m-1$. Therefore,
denoting by $\mu$ the largest multiplicity of an element of $\alp$, and
observing that $\mu=\max_{j\ge 1}\nu_j$, for any integer $r\ge 1$ we obtain
\begin{align*}
  \frac12\,r(r+1)\mu + m
     &\ge (r\nu_1+(r-1)\nu_2\longp \nu_r) \\
     &\qquad\qquad      + (1+\nu_1+2\nu_2\longp r\nu_r+(r+1)\nu_{r+1}+\dotsb)  \\
     & \ge (r+1) n+1
\end{align*}
which is equivalent to $\mu\ge\frac2r\(n-\frac{m-1}{r+1}\)$.
\end{proof}

\vfill

\bigskip

\end{document}